\newtheorem{thm}{Theorem}[section]
\newtheorem{rem}[thm]{Remark}
\author{}
\title{}
\begin{document}
\begin{center}
{\bf\Large COMMUTATION RELATIONS AND HYPERCYCLIC OPERATORS}
\vskip 0.1cm
{VITALY E. KIM}
\end{center}
\begin{abstract}
In this paper we establish hypercyclicity of continuous linear operators on $H(\mathbb{C})$ that satisfy certain commutation relations.
\end{abstract}
\section{Introduction and statement of the main results}
Let $X$ be a topological vector space. A continuous linear operator $T:X\rightarrow X$ is said to be {\it hypercyclic} if there exists an element $x\in X$ such that it's orbit $\mathrm{Orb}(T,x)=\{T^nx,\;n=0,1,2,...\}$ is dense in $X$. Let us denote by $H(\mathbb{C})$ the space of all entire functions with the uniform convergence topology. For $\lambda\in\mathbb{C}$ we will denote by $S_\lambda$ the translation operator on $H(\mathbb{C})$: $S_\lambda f(z)\equiv f(z+\lambda)$. The first examples of hypercyclic operators are due to G. D. Birkhoff and G. MacLane. In 1929, Birkhoff \cite{1} showed that the translation operator $S_\lambda$ is hypercyclic on $H(\mathbb{C})$ if $\lambda\neq0$. In 1952, MacLane \cite{10} proved that the differentiation operator is hypercyclic on $H(\mathbb{C})$. A significant generalization of these results was proved by G. Godefroy and J. H. Shapiro \cite{3} in 1991. They showed that every convolution operator on $H(\mathbb{C}^N)$ that is not a scalar multiple of the identity is hypercyclic. Recall that a convolution operator on $H(\mathbb{C}^N)$ is a continuous linear operator that commutes with all translations (or equivalently, commutes with each partial differentiation operator).
\par
Some classes of non-convolution hypercyclic operators on $H(\mathbb{C})$ (or $H(\mathbb{C}^N)$) can be found in \cite{16}, \cite{22}, \cite{17}, \cite{15}, \cite{18}. More information on hypercyclic operators can be found in \cite{19}, \cite{13}, \cite{20}.
\par
In the present paper we prove the following result.
\begin{thm}\label{th1}
Let $T:H(\mathbb{C})\rightarrow H(\mathbb{C})$ be a continuous linear operator that satisfies the following conditions:
\begin{enumerate}
	\item $\mathrm{ker}\,T\neq\{0\}$;
	\item $T$ satisfies the commutation relation:
\begin{equation}\label{f1}
	[T,\frac{\mathrm{d}}{\mathrm{d}z}]=aI,
\end{equation}
where $a\in\mathbb{C}\setminus\{0\}$ is some constant and $I$ is identity operator.
\end{enumerate}
Then $T$ is hypercyclic.
\end{thm}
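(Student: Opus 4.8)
The plan is to locate an abundance of eigenvectors of $T$ and then invoke the Godefroy--Shapiro eigenvalue criterion for hypercyclicity. Write $D=\mathrm{d}/\mathrm{d}z$ and let $M$ denote the operator of multiplication by $z$. The first observation is purely algebraic: iterating \eqref{f1} gives $[T,D^n]=na\,D^{n-1}$, and summing the series $S_c=e^{cD}$ yields the clean relation $[T,S_c]=ac\,S_c$ for every $c\in\mathbb{C}$. Fixing a nonzero $f_0\in\ker T$ (available by hypothesis (1)), this gives $T(S_cf_0)=S_c(Tf_0)+ac\,S_cf_0=ac\,S_cf_0$. Thus for each $c$ the translate $S_cf_0=f_0(\cdot+c)$ is an eigenvector of $T$ with eigenvalue $ac$; since $a\neq 0$, the eigenvalues $ac$ sweep out all of $\mathbb{C}$ as $c$ varies. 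In particular $T$ has eigenvectors with eigenvalue in the open disk $\{|\lambda|<1\}$ (namely $|c|<1/|a|$) and in the exterior $\{|\lambda|>1\}$ (namely $|c|>1/|a|$).

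By the Godefroy--Shapiro criterion it then suffices to prove that each of the families $\{S_cf_0:|c|<1/|a|\}$ and $\{S_cf_0:|c|>1/|a|\}$ spans a dense subspace of $H(\mathbb{C})$. Since $c\mapsto S_cf_0$ is a holomorphic $H(\mathbb{C})$-valued map, for any continuous functional $\mu$ the scalar function $c\mapsto\mu(S_cf_0)$ is entire; if it vanishes on either of these open sets it vanishes identically. Hence both density statements reduce to the single claim that the closed subspace $V:=\overline{\operatorname{span}}\{S_cf_0:c\in\mathbb{C}\}=\overline{\operatorname{span}}\{D^nf_0:n\ge 0\}$ equals $H(\mathbb{C})$.

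To identify $V$ I would exploit the commutation relation once more. The space $V$ is $D$-invariant by construction, and $T(D^nf_0)=na\,D^{n-1}f_0\in V$ shows it is $T$-invariant. Next, $C:=T+aM$ commutes with $D$ (indeed $[C,D]=aI+a[M,D]=aI-aI=0$), so $C$ is a convolution operator and acts as $\phi(D)=\sum_n b_nD^n$ for some entire $\phi$ of exponential type; consequently $Cf_0=\sum_n b_nD^nf_0\in V$. But $Cf_0=(T+aM)f_0=a\,zf_0$, so $zf_0\in V$, and using $[D^n,M]=nD^{n-1}$ one gets $z\,D^nf_0=D^n(zf_0)-nD^{n-1}f_0\in V$ for all $n$. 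Thus $V$ is invariant under multiplication by $z$ as well. Finally, a nonzero closed subspace of $H(\mathbb{C})$ invariant under both $D$ and $M$ must be all of $H(\mathbb{C})$: the operator $N=MD$ preserves $V$, hence so does each dilation $f(z)\mapsto f(\rho z)$ (a convergent power series in $N$), and the average $\frac{1}{2\pi}\int_0^{2\pi}e^{-in\theta}f(e^{i\theta}z)\,\mathrm{d}\theta$ isolates the monomial $z^n$; picking $f\in V$ with a nonzero Taylor coefficient places some $z^n$ in $V$, after which $D$- and $M$-invariance force every monomial into $V$, so $V\supseteq\overline{\mathbb{C}[z]}=H(\mathbb{C})$.

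The conceptual core is the first paragraph: the commutation identity turns the translates of a single kernel vector into a full family of eigenvectors spanning every eigenvalue in $\mathbb{C}$, which is exactly what the Godefroy--Shapiro machinery feeds on. I expect the genuine work to lie in the density step, specifically in justifying the convolution-operator representation $C=\phi(D)$ (so that $Cf_0\in V$) and the irreducibility of the joint action of $M$ and $D$ on $H(\mathbb{C})$. These are standard facts about $H(\mathbb{C})$, but they are precisely the points where the argument must be made rigorous.
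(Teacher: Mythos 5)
Your proof is correct, and while the first half (the eigenvector computation $T S_c f_0 = ac\,S_c f_0$ from iterating the commutator, followed by the Godefroy--Shapiro criterion) coincides with the paper's argument, your treatment of the essential density step is genuinely different. The paper isolates this step as a separate completeness theorem: assuming the translates are not complete, it produces via Hahn--Banach a convolution operator $M_\Phi$ annihilating $f_0$, applies $T^n$ to generate the infinite system $f_0\in\ker M_{\Phi^{(n)}}$ for all $n$, and then invokes L.~Schwartz's spectral synthesis theorem for mean-periodic functions: the differentiation-invariant subspace $W=\bigcap_n\ker M_{\Phi^{(n)}}$ contains no exponential monomial (else $\Phi^{(n)}(\mu)=0$ for all $n$, forcing $\Phi\equiv0$), hence $W=\{0\}$, a contradiction. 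You instead use the structural fact that $C=T+aM$ commutes with $D$ and is therefore a convolution operator $\phi(D)$ --- this is exactly the paper's Theorem 4.1, which the paper proves only later and uses purely for classification, never in the main proof --- to show $zf_0\in V$, deduce that $V$ is invariant under both $D$ and $M$, and finish with an elementary irreducibility argument via the dilation semigroup $e^{wN}$, $N=MD$, and Fourier averaging over the circle to extract monomials. (The convergence you flag does hold: with $|a_n|\le \|f\|_r r^{-n}$ one gets $\sum_{k}\frac{|w|^k}{k!}\sup_{|z|\le R}|N^kf|\le\|f\|_r\sum_n\bigl(Re^{|w|}/r\bigr)^n<\infty$ for $r>Re^{|w|}$, so $e^{wN}f=f(e^wz)$ in $H(\mathbb{C})$ for every $w$.) What each approach buys: the paper's duality argument directly yields the stronger completeness statement for translates over any set $\Lambda$ with a finite accumulation point, but leans on Schwartz's deep synthesis theorem --- which, as the paper's own remark notes, fails in $H(\mathbb{C}^N)$ by Gurevich's counterexample, blocking that method in several variables. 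Your argument is self-contained and avoids spectral synthesis entirely (and your analytic-continuation reduction in fact recovers the same accumulation-point strength), so it is more elementary and potentially more portable to the $\mathbb{C}^N$ setting the author defers to future work.
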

\begin{rem}
In Theorem \ref{th1} we consider a constant $a\neq0$. Note that if $a=0$ in (\ref{f1}) then $T$ is a convolution operator on $H(\mathbb{C})$ that is hypercyclic by a Theorem of Godefroy and Shapiro \cite{3}.
\end{rem}
We also prove the following more general result.
\begin{thm}\label{th2}
Let $T:H(\mathbb{C})\rightarrow H(\mathbb{C})$ satisfies the conditions of Theorem \ref{th1} and let $L$ be an entire function such that $L\not\equiv\mathrm{const}$ and $L(T)$ is a continuous operator from $H(\mathbb{C})$ to itself. Then $L(T)$ is hypercyclic. 
\end{thm}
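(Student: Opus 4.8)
The plan is to deduce Theorem \ref{th2} from the same eigenvector analysis that drives Theorem \ref{th1}, fed into the Godefroy--Shapiro eigenvalue criterion \cite{3}. Write $D=\frac{\mathrm d}{\mathrm dz}$, so that the translation operator is $S_\lambda=e^{\lambda D}$. The commutation relation (\ref{f1}) propagates by the derivation identity $[T,D^n]=naD^{n-1}$ to $[T,g(D)]=ag'(D)$ for every entire $g$ for which $g(D)$ is defined; taking $g(\zeta)=e^{-\sigma\zeta}$ gives $[T,S_{-\sigma}]=-a\sigma\,S_{-\sigma}$. First I would fix $f_0\in\ker T\setminus\{0\}$ (available by hypothesis~1) and set $f_\sigma:=S_{-\sigma}f_0=f_0(\cdot-\sigma)$. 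Then $Tf_\sigma=S_{-\sigma}Tf_0+[T,S_{-\sigma}]f_0=-a\sigma f_\sigma$, so each $f_\sigma$ is an eigenvector of $T$ with eigenvalue $-a\sigma$, and consequently $L(T)f_\sigma=L(-a\sigma)f_\sigma$. Thus $\{f_\sigma:\sigma\in\mathbb C\}$ is a family of eigenvectors of $L(T)$ whose eigenvalues $L(-a\sigma)$ exhaust the range of $L$.

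By the Godefroy--Shapiro criterion it then suffices to show that the spans of $\{f_\sigma:|L(-a\sigma)|<1\}$ and of $\{f_\sigma:|L(-a\sigma)|>1\}$ are each dense in $H(\mathbb C)$. Put $g(\sigma)=L(-a\sigma)$, which is a nonconstant entire function since $a\neq0$ and $L\not\equiv\mathrm{const}$. The sets $\Omega_-=\{\sigma:|g(\sigma)|<1\}$ and $\Omega_+=\{\sigma:|g(\sigma)|>1\}$ are open, and I would check they are nonempty by Liouville: if $\Omega_+=\varnothing$ then $g$ is a bounded entire function, hence constant; and if $\Omega_-=\varnothing$ then $g$ is zero-free with $|1/g|\le 1$, so $1/g$ is constant. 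Either conclusion contradicts nonconstancy of $g$. Hence $\Omega_\pm$ are nonempty open sets, each possessing an accumulation point.

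The crux, and the step I expect to be the main obstacle, is the density lemma established for Theorem~\ref{th1}, which I would reuse in the form: \emph{for every nonempty open $\Omega\subseteq\mathbb C$, the span of $\{f_\sigma:\sigma\in\Omega\}$ is dense in $H(\mathbb C)$.} To prove it, suppose $\mu\in H(\mathbb C)'$ annihilates all $f_\sigma$ with $\sigma\in\Omega$. The map $\sigma\mapsto\langle\mu,f_0(\cdot-\sigma)\rangle$ is entire and vanishes on $\Omega$, hence vanishes identically, so $\mu$ kills every translate of $f_0$; equivalently, $\Phi(D)f_0=0$, where $\Phi$ is the Borel transform of $\mu$ (entire of exponential type). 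If $\mu\neq0$ then $\Phi\neq0$, and here the commutation relation does the work: from $[T,\Phi(D)]=a\Phi'(D)$ and $Tf_0=0$ one gets $\Phi(D)f_0=0\Rightarrow a\Phi'(D)f_0=0\Rightarrow\cdots\Rightarrow\Phi^{(k)}(D)f_0=0$ for all $k\ge0$. Since $\bigcap_k Z(\Phi^{(k)})=\varnothing$ for a nonzero $\Phi$, a nonzero entire function cannot lie in $\bigcap_k\ker\Phi^{(k)}(D)$, forcing $f_0=0$, a contradiction. This is the delicate point, as it requires ruling out that the kernel vector $f_0$ is annihilated by any nonzero convolution operator. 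Granting the lemma, applying it to the nonempty open sets $\Omega_-$ and $\Omega_+$ furnishes the two dense eigenspaces, and the Godefroy--Shapiro criterion yields the hypercyclicity of $L(T)$.
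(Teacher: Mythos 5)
Your proposal follows essentially the same route as the paper: translates of a kernel vector are eigenvectors (your $f_\sigma=S_{-\sigma}f_0$ with $Tf_\sigma=-a\sigma f_\sigma$ is exactly the paper's family $TS_\lambda f=a\lambda S_\lambda f$ up to a sign of the parameter), so $L(T)f_\sigma=L(-a\sigma)f_\sigma$; Liouville makes the sublevel and superlevel sets of the nonconstant entire function $\sigma\mapsto L(-a\sigma)$ nonempty and open; and the Godefroy--Shapiro criterion reduces everything to a density lemma for translates indexed by a set with an accumulation point, which is the paper's Theorem 2.2. Your proof of that lemma also matches the paper's: a nonzero annihilating functional $\mu$ yields $\Phi(D)f_0=0$ for the Borel transform $\Phi\neq0$, and the commutation relation bootstraps this to $\Phi^{(k)}(D)f_0=0$ for all $k$, while $\bigcap_k Z(\Phi^{(k)})=\varnothing$ since a common zero of all derivatives would force $\Phi\equiv0$.

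The one genuine gap is the step you yourself flag as delicate: from $\bigcap_k Z(\Phi^{(k)})=\varnothing$ you assert, with no argument, that no nonzero entire function can lie in $W=\bigcap_k\ker\Phi^{(k)}(D)$. This implication is not elementary. What the absence of common zeros gives you directly is only that $W$ contains no exponential monomial $z^pe^{\mu z}$ (such a monomial in $W$ would force $\Phi^{(k)}(\mu)=0$ for all $k$). To pass from ``$W$ is a closed differentiation-invariant subspace containing no exponential monomials'' to $W=\{0\}$ you must invoke L. Schwartz's spectral synthesis theorem \cite{14} --- every closed $D$-invariant subspace of $H(\mathbb{C})$ is the closed span of the exponential monomials it contains --- and this is precisely the citation the paper supplies at this point. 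That the step is theorem-strength, not a formality, is shown by Gurevich's counterexample \cite{26}: in $H(\mathbb{C}^N)$ with $N>1$ there exist nontrivial closed $D$-invariant subspaces containing no exponential monomials, so no argument avoiding one-variable spectral synthesis can close this gap. With that citation inserted, your proof coincides with the paper's.
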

The paper is organised as follows. In Section 2 we formulate and prove a theorem that establishes the completeness of translates of certain entire functions. In Section 3 we prove Theorems \ref{th1} and \ref{th2} by application of the results of Section 2 and the Godefroy-Shapiro criterion. In Section 4 we give a description of operators that are hypercyclic by Theorem \ref{th1} or \ref{th2} and provide some examples.
\section{Complete systems of translates of entire functions}
Recall that the system of entire functions $\{f_\lambda(z),\lambda\in\Lambda\subset\mathbb{C}\}$ is called complete in $H(\mathbb{C})$ if $\overline{\mathrm{span}\{f_\lambda,\lambda\in\Lambda\}}=H(\mathbb{C})$. The connection between complete systems of entire functions and hypercyclic operators on $H(\mathbb{C})$ is established by the Godefroy-Shapiro criterion which was exhibited by G. Godefroy and J. H. Shapiro in \cite{3} (see also e.g. \cite[Ch. 1]{13}):
\begin{thm}[Godefroy-Shapiro criterion]\label{th4}
Let $T$ be a continuous linear operator on $H(\mathbb{C})$. Suppose that there is a system of entire functions $\{f_\lambda\}_{\lambda\in\mathbb{C}}$ such that:
\begin{enumerate}
	\item $T[f_\lambda]=\lambda f_\lambda$, $\forall\lambda\in\mathbb{C}$;
	\item both systems $\{f_\lambda,\,\lambda\in\mathbb{C}:|\lambda|<1\}$ and $\{f_\lambda,\,\lambda\in\mathbb{C}:|\lambda|>1\}$ are complete in $H(\mathbb{C})$.
	\end{enumerate}
Then $T$ is hypercyclic
\end{thm}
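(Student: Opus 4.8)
The plan is to deduce hypercyclicity from topological transitivity via the Birkhoff transitivity theorem, using the two complete (hence dense) linear spans supplied by the hypothesis. Since $H(\mathbb{C})$, equipped with the topology of uniform convergence on compact sets, is a separable Fréchet space and therefore a Baire space, it suffices to show that $T$ is topologically transitive: for every pair of nonempty open sets $U,V\subset H(\mathbb{C})$ there is an $n$ with $T^n(U)\cap V\neq\emptyset$. The set of hypercyclic vectors then comes out as a dense $G_\delta$.

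First I would record two elementary facts about the eigenvectors. Set
\[
X_0=\mathrm{span}\{f_\lambda:|\lambda|<1\},\qquad Y_0=\mathrm{span}\{f_\mu:|\mu|>1\}.
\]
By condition (2) both $X_0$ and $Y_0$ are dense in $H(\mathbb{C})$. Because eigenvectors corresponding to distinct eigenvalues are linearly independent, the assignment $f_\mu\mapsto\mu^{-1}f_\mu$ (for $|\mu|>1$) extends to a well-defined linear map $S:Y_0\to Y_0$ satisfying $TS=I$ on $Y_0$; consequently $T^nS^n=I$ on $Y_0$ for every $n$.

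Next I would exhibit the two decaying behaviours. If $u\in X_0$, then $u=\sum_j c_j f_{\lambda_j}$ is a finite sum with $|\lambda_j|<1$, so $T^nu=\sum_j c_j\lambda_j^n f_{\lambda_j}\to 0$ as $n\to\infty$. Dually, if $v\in Y_0$, then $v=\sum_k d_k f_{\mu_k}$ with $|\mu_k|>1$, whence $S^nv=\sum_k d_k\mu_k^{-n}f_{\mu_k}\to 0$. With these in hand transitivity is immediate: given nonempty open $U$ and $V$, choose $u\in U\cap X_0$ and $v\in V\cap Y_0$ by density, and set $x_n=u+S^nv$. Since $S^nv\to 0$ we have $x_n\to u$, so $x_n\in U$ for all large $n$; and since $T^nx_n=T^nu+T^nS^nv=T^nu+v\to v$, we have $T^nx_n\in V$ for all large $n$. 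Any sufficiently large $n$ then witnesses $T^n(U)\cap V\neq\emptyset$.

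The conceptual content — the Hypercyclicity (Kitai) Criterion packaged for this setting — is thus straightforward once the eigenvector bookkeeping is set up. The only point requiring genuine care is the well-definedness of the right inverse $S$, which rests on the linear independence of the $f_\mu$; the remaining convergence statements are finite geometric sums, and the topological input (separability and the Baire property of $H(\mathbb{C})$) is standard. I expect no serious obstacle beyond checking that a single index $n$ can be chosen large enough to satisfy both $x_n\in U$ and $T^nx_n\in V$ simultaneously, which is automatic since each condition holds for all large $n$.
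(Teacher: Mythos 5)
Your proof is correct: the only delicate point, the well-definedness of the right inverse $S$ on $Y_0$, is properly handled via the linear independence of nonzero eigenvectors belonging to distinct eigenvalues, and the passage from the two decaying behaviours to topological transitivity and then to hypercyclicity via Birkhoff's transitivity theorem (using that $H(\mathbb{C})$ is a separable Fr\'echet, hence Baire, space) is complete. The paper itself states this criterion without proof, citing \cite{3} and \cite[Ch.~1]{13}, and the argument in those sources is essentially the one you give --- the eigenvalue criterion deduced from the Hypercyclicity Criterion --- so your proposal matches the standard proof the paper relies on.
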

In this section we prove the following result.
\begin{thm}\label{th23}
Let $T:H(\mathbb{C})\rightarrow H(\mathbb{C})$ satisfies the conditions of Theorem \ref{th1} for some $a\neq0$. Then the system $\{S_\lambda f\}_{\lambda\in\Lambda}$ is complete in $H(\mathbb{C})$ for any function $f\in\mathrm{ker}\,T\setminus\{0\}$ and any set $\Lambda\subset\mathbb{C}$ that has an accumulation point in $\mathbb{C}$.
\end{thm}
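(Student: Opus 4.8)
The plan is to pass to the dual space and reduce the completeness assertion to the vanishing of a single annihilating functional, exploiting the algebraic structure that (\ref{f1}) forces on $T$. Write $D=\mathrm{d}/\mathrm{d}z$ and let $M$ denote multiplication by $z$. Since $[M,D]=-I$, the relation (\ref{f1}) gives $[T+aM,D]=aI+a[M,D]=0$, so $T+aM$ is a continuous operator commuting with $D$. By the classical description of such operators (a continuous operator on $H(\mathbb{C})$ commutes with differentiation iff it is a convolution operator) there is an entire function $\varphi$ of exponential type with $T=\varphi(D)-aM$. The same relation also shows, by integrating $\frac{\mathrm{d}}{\mathrm{d}\lambda}\bigl(S_{-\lambda}TS_\lambda\bigr)=S_{-\lambda}[T,D]S_\lambda=aI$, that $S_{-\lambda}TS_\lambda=T+a\lambda I$; hence for $f\in\ker T$ one has $T(S_\lambda f)=a\lambda\,S_\lambda f$, so the translates of a kernel element are exactly eigenvectors of $T$. (This second viewpoint is what feeds into the Godefroy--Shapiro criterion in Section 3, but the completeness proof will use the convolution structure.)

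By the Hahn--Banach theorem it suffices to show that any $\mu\in H(\mathbb{C})'$ annihilating every $S_\lambda f$, $\lambda\in\Lambda$, is zero. For such $\mu$ the function $F(\lambda)=\mu(S_\lambda f)$ is entire in $\lambda$, because $\lambda\mapsto S_\lambda f$ is a holomorphic $H(\mathbb{C})$-valued curve and $\mu$ is continuous. It vanishes on $\Lambda$, which has an accumulation point, so $F\equiv0$ by the identity theorem; reading off its Taylor coefficients at $0$ gives $\mu(f^{(n)})=0$ for all $n\ge0$. Thus $\mu$ annihilates the subspace $V=\mathrm{span}\{f^{(n)}:n\ge0\}$ and, by continuity, its closure $\overline V$.

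The decisive use of the structure of $T$ is that $\overline V$ is invariant under multiplication by $z$. Indeed $V$ is $D$-invariant ($Df^{(n)}=f^{(n+1)}$) and $T$-invariant: from $[T,D^n]=anD^{n-1}$ applied to $f$ we get $Tf^{(n)}=an\,f^{(n-1)}$. Passing to the closure, and using that $\varphi(D)g=\sum_k \varphi^{(k)}(0)g^{(k)}/k!$ converges in $H(\mathbb{C})$ (so that $\varphi(D)$ preserves any closed $D$-invariant subspace), $\overline V$ is invariant under $D$, $T$ and $\varphi(D)$, hence under $M=a^{-1}\bigl(\varphi(D)-T\bigr)$. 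Consequently $z^k f^{(n)}\in\overline V$ for all $k,n$, so $\mu(z^k f^{(n)})=0$; since polynomials are dense in $H(\mathbb{C})$ and $g\mapsto\mu(g\,f^{(n)})$ is continuous, we obtain $\mu(g\,f^{(n)})=0$ for every $g\in H(\mathbb{C})$ and every $n$.

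To finish, fix an arbitrary $q\in\mathbb{C}$ and set $r=\mathrm{ord}_q f<\infty$; then $h:=f^{(r)}\in V$ satisfies $h(q)\ne0$, and the previous step gives $\mu(gh)=0$ for all $g$. Since $h\,H(\mathbb{C})=\{\psi:\psi\text{ vanishes at each zero of }h\text{ to at least its order}\}$ (division by $h$ keeps the quotient entire), $\mu$ annihilates this subspace, hence is carried by the zero set $Z(h)$, a discrete set not containing $q$; being continuous, $\mu$ is a finite combination of derivative-evaluations $\delta_p^{(j)}$ with $p\in Z(h)$. As $q\notin Z(h)$ and $q$ is arbitrary, the finite set of points carrying $\mu$ avoids every point of $\mathbb{C}$ and is therefore empty, so $\mu=0$ and the system is complete. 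I expect the main obstacle to be the invariance step: recognizing through (\ref{f1}) that $\overline V$ is stable under multiplication by $z$ is exactly what upgrades it from a merely differentiation-invariant subspace (whose closure need not be all of $H(\mathbb{C})$, as $f=e^z$ shows) to an ideal-like object, after which the localization of the carrier of $\mu$ — relying on division by the entire function $h$ and the structure of functionals supported on a discrete set — closes the argument.
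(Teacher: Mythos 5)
Your proposal is correct, but it follows a genuinely different route from the paper's proof. Both arguments begin identically: dualize via Hahn--Banach, note that $\lambda\mapsto(\mu,S_\lambda f)$ is entire and vanishes on $\Lambda$, and invoke the identity theorem; and both run on the relation (\ref{f3}), $Tf^{(n)}=anf^{(n-1)}$. But the paper then goes in the opposite direction: from $M_\Phi f=0$ it applies $T^n$ to get $M_{\Phi^{(n)}}f=0$ for all $n$, places $f$ in the closed differentiation-invariant subspace $W=\bigcap_n\mathrm{ker}\,M_{\Phi^{(n)}}$, observes that $W$ contains no exponential monomial (since $\Phi^{(n)}(\mu)=0$ for all $n$ would force $\Phi\equiv0$), and kills $W$ with Schwartz's spectral synthesis theorem \cite{14}. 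You instead import the structural decomposition $T=\varphi(D)-azI$ --- which the paper proves only later, as Theorem \ref{th5}, and never uses in this proof --- to show that $\overline{\mathrm{span}}\{f^{(n)}\}$ is invariant under multiplication by $z$, and then replace spectral synthesis by a division argument: the annihilating functional kills the principal ideal $hH(\mathbb{C})$ with $h=f^{(r)}$, $h(q)\neq0$, hence is a finite combination of derivative-evaluations $\delta_p^{(j)}$ at points of $Z(h)$, and varying $q$ empties the carrier. The trade-off: the paper's proof is shorter but rests on Schwartz's theorem, a deep one-variable input (which, as the paper's own remark notes, fails in $H(\mathbb{C}^N)$ by Gurevich's counterexample); yours avoids it at the cost of two standard facts you state without proof and should justify or cite --- (i) that a continuous functional annihilating $hH(\mathbb{C})$ is a finite combination of $\delta_p^{(j)}$, $p\in Z(h)$, $j<m_p$ (via the jet map $H(\mathbb{C})\to\prod_{p\in Z(h)}\mathbb{C}^{m_p}$, surjective by Weierstrass--Mittag-Leffler interpolation, plus the open mapping theorem and the duality of countable products), and (ii) the linear independence of derivative-evaluations at distinct points, which makes the minimal carrier of $\mu$ well defined so that ``avoids every $q$'' indeed forces $\mu=0$. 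Note also that your $z$-invariance step is sound precisely because the series $\varphi(D)g=\sum_k d_kg^{(k)}$ converges in $H(\mathbb{C})$, which is part of the classical representation (\ref{eq1}) the paper records, and that your parenthetical computation $S_{-\lambda}TS_\lambda=T+a\lambda I$ duplicates Section 3 of the paper and is not needed for the completeness claim.
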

\begin{proof}
Let $T$ satisfy the conditions of Theorem \ref{th1}. Then there is an entire function $f$ such that $f\in\mathrm{ker}\,T$  and $f\not\equiv0$. From (\ref{f1}) it follows that
$$[T,\frac{\mathrm{d}^n}{\mathrm{d}z^n}]=an\frac{\mathrm{d}^{n-1}}{\mathrm{d}z^{n-1}},\;n=0,1,...$$
(see e.g. \cite[\S 16]{23}). Hence,
\begin{equation}\label{f3}
	Tf^{(n)}=anf^{(n-1)},\;n=0,1,...
\end{equation}
Recall that every convolution operator $M_L$ on $H(\mathbb{C})$ can be represented as a linear differential operator of, generally speaking, infinite order with constant coefficients: 
\begin{equation}\label{eq1}
M_L[f](z)=\sum_{k=0}^\infty d_kf^{(k)}(z),
\end{equation}
where $L(\lambda)=\sum_{k=0}^\infty d_k\lambda^k$ is an entire function of exponential type. The function $L$ is called a characteristic function of the convolution operator (\ref{eq1}). A convolution operator (\ref{eq1}) can be also represented in the form
\begin{equation}\label{eq101}
M_L[f](\lambda)=(F_L,S_\lambda f),
\end{equation}
where $F_L$ is a continuous linear functional on $H(\mathbb{C})$ such that $(F_L,\exp(\lambda z))=L(\lambda)$ (see e.g. \cite{24}).
\par
Let $\{\lambda_j\}_{j=1}^\infty$ be the set of zeros of $L$. Denote by $m_j$ the multiplicity of zero $\lambda_j$. Then $\mathrm{ker}\,M_L$ contains exponential monomials $z^b\exp(\lambda_j z)$, $b=0,1, ...,m_j-1$, $j\in\mathbb{N}$ and does not contain any other exponential monomials. In particular, $\mathrm{ker}\,M_L$ does not contain any exponential monomial of the kind $z^p\exp(\mu z)$ if $L(\mu)\neq0$. By the result of L. Schwartz \cite{14}, every differentiation-invariant closed linear subspace $V$ of $H(\mathbb{C})$ coincides with the closure of the linear span of exponential monomials that are contained in $V$. Hence,
$$\mathrm{ker}\,M_L=\overline{\mathrm{span}\{z^b\exp(\lambda_j z),\,j\in\mathbb{N},\,b=0,1, ...,m_j-1\}}.$$
Denote by $\mathrm{Conv}[H(\mathbb{C})]$ the class of all convolution operators (\ref{eq101}) on $H(\mathbb{C})$ except the operator of multiplication by zero.
\par
Let $\Lambda$ be an arbitrary subset of $\mathbb{C}$ that has an accumulation point in $\mathbb{C}$. In order to prove that the system $\{S_\lambda f\}_{\lambda\in\Lambda}$ is complete in $H(\mathbb{C})$, it is enough to prove that $f\notin\mathrm{ker}\,M$ for any $M\in\mathrm{Conv}[H(\mathbb{C})]$. Indeed, if the system $\{S_\lambda f\}_{\lambda\in\Lambda}$ is not complete in $H(\mathbb{C})$, then according to the Approximation Principle (see e.g. \cite[Ch. 2]{25}) there is a non-zero continuous linear functional $F$ on $H(\mathbb{C})$ such that $(F,S_\lambda f)=0$, $\forall\,\lambda\in\Lambda$ and hence, $f\in\mathrm{ker}\,M$ for some $M\in\mathrm{Conv}[H(\mathbb{C})]$.
\par
Assume that $f\in\mathrm{ker}\,M_\Phi$ for some $M_\Phi\in\mathrm{Conv}[H(\mathbb{C})]$ with the characteristic function $\Phi(\lambda)=\sum_{k=0}^\infty b_k\lambda^k$, i.e.
$$\sum_{k=0}^\infty b_kf^{(k)}(z)\equiv0.$$
Then, obviously, $T^nM_\Phi f=0$, $\forall n\in\mathbb{N}$. Using (\ref{f3}) we obtain
$$T^nM_\Phi[f](z)\equiv a^n\sum_{k=n}^\infty b_k\frac{k!}{(k-n)!}f^{(k-n)}(z).$$
Hence,
\begin{equation}\label{f4}
\sum_{k=n}^\infty b_k\frac{k!}{(k-n)!}f^{(k-n)}(z)\equiv0,\,n=0,1,\cdots.	
\end{equation}
We see that $f$ satisfies the infinite system of convolution equations (\ref{f4}) with the characteristic functions $\Phi^{(n)}$, $n=0,1,\cdots$. Let us note that $\Phi\not\equiv0$, since by assumption $M_\Phi\in\mathrm{Conv}[H(\mathbb{C})]$.
\par
Denote $W=\bigcap_{n=0}^\infty\mathrm{ker}\,M_{\Phi^{(n)}}$. It is easy to see that $W$ is differentiation-invariant closed linear subspace of $H(\mathbb{C})$. Assume that $W$ contains some exponential monomial of the kind $z^p\exp(\mu z)$. Then $\Phi^{(n)}(\mu)=0$, $n=0,1,\cdots$. But this is impossible, since by assumption $\Phi\not\equiv0$. Hence, $W$ does not contain any functions of the kind $z^p\exp(\mu z)$. Then, by the result of L. Schwartz \cite{14}, $W$ is trivial, i.e. $W=\{0\}$. We have a contradiciton to the fact that $f\in W$ and $f\not\equiv0$. Hence, $f\notin\mathrm{ker}\,M_\Phi$.
\end{proof}
\begin{rem}
In the proof of Theorem \ref{th23} we use the result of L. Schwartz \cite{14} on spectral synthesis in $H(\mathbb{C})$. But we cannot prove the analogue of Theorem \ref{th23} for $H(\mathbb{C}^N)$ $(N>1)$ using the same method because there is a conter-example on spectral synthesis in $H(\mathbb{C}^N)$ by D. Gurevich \cite{26}. In particular Gurevich showed that there is a non-trivial differentiation-invariant subspace $V$ in $H(\mathbb{C}^N)$ that does not contain any exponential monomial. We are going to discuss the analogues of Theorems \ref{th23}, \ref{th1} and \ref{th2} for $H(\mathbb{C}^N)$ in one of the next papers.
\end{rem}
\section{Proofs of Theorems \ref{th1} and \ref{th2}}
It is enough to prove Theorem \ref{th2}, since Theorem \ref{th1} is a special case of Theorem \ref{th2}, when $L(z)\equiv z$. Let $T:H(\mathbb{C})\rightarrow H(\mathbb{C})$ and $L\in H(\mathbb{C})$ satisfy the conditions of Theorem \ref{th2}. Let $f\in\mathrm{ker}\,T\setminus\{0\}$. Using the Taylor expansion we can write:
$$S_\lambda f(z)=\sum_{n=0}^\infty\frac{f^{(n)}(z)}{n!}\lambda^n.$$	
Then from (\ref{f3}) it follows that 
$$TS_\lambda f=\sum_{n=0}^\infty\frac{T[f^{(n)}(z)]}{n!}\lambda^n=\sum_{n=1}^\infty\frac{anf^{(n-1)}(z)}{n!}\lambda^n=a\lambda
\sum_{n=0}^\infty\frac{f^{(n)}(z)}{n!}\lambda^n.$$
Hence, $TS_\lambda f=a\lambda S_\lambda f$ and $L(T)f=L_a(\lambda)S_\lambda f$, $\forall\lambda\in\mathbb{C}$, where $L_a(\lambda)\equiv L(a\lambda)$. Since $L_a\not\equiv\mathrm{const}$, then both sets $\{\lambda\in\mathbb{C}:|L_a(\lambda)|<1\}$ and $\{\lambda\in\mathbb{C}:|L_a(\lambda)|>1\}$ are non-empty open subsets of $\mathbb{C}$. Hence, $L(T)$ is hypercyclic by Theorems \ref{th23} and \ref{th4}.

\section{Operators that satisfy Theorem \ref{th1} or \ref{th2}}
In this section we give the examples of linear continuous operators on $H(\mathbb{C})$ which are hypercyclic by Theorem \ref{th1} or \ref{th2}. First, we shall describe the class of linear continuous operators on $H(\mathbb{C})$ that satisfy the condition 2 of Theorem \ref{th1}.
\begin{thm}\label{th5}
A continuous linear operator $T:H(\mathbb{C})\rightarrow H(\mathbb{C})$ satisfies the commutation relations (\ref{f1}) if and only if $T=M-azI$, where $M$ is a convolution operator (\ref{eq1}) on $H(\mathbb{C})$.
\end{thm}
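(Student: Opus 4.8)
The plan is to prove the two implications separately, since the substantive content lies entirely in the ``only if'' direction, while the ``if'' direction is a one-line verification. Throughout, write $D=\frac{\mathrm{d}}{\mathrm{d}z}$, and observe first the elementary commutator identity that drives everything: for any $f\in H(\mathbb{C})$,
\[
[zI,D]f=zf'(z)-\frac{\mathrm{d}}{\mathrm{d}z}\bigl(zf(z)\bigr)=zf'(z)-f(z)-zf'(z)=-f(z),
\]
so that $[zI,D]=-I$. I would also note at the outset that multiplication by $z$ is a continuous linear operator on $H(\mathbb{C})$, since on any compact set $K$ one has $|zf_n(z)-zf(z)|\le (\max_{K}|z|)\,|f_n(z)-f(z)|$, so uniform convergence on compacta is preserved.

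For the ``if'' direction, suppose $T=M-azI$ with $M$ a convolution operator. By definition (recalled in the Introduction) a convolution operator commutes with differentiation, so $[M,D]=0$. Using bilinearity of the commutator together with the identity above gives
\[
[T,D]=[M,D]-a[zI,D]=0-a(-I)=aI,
\]
which is exactly (\ref{f1}). This direction requires nothing more.

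For the ``only if'' direction, assume $[T,D]=aI$ and set $M:=T+azI$. Since $T$ is continuous by hypothesis and multiplication by $z$ is continuous, $M$ is a continuous linear operator on $H(\mathbb{C})$. The commutator computation now runs in reverse:
\[
[M,D]=[T,D]+a[zI,D]=aI+a(-I)=0,
\]
so $M$ commutes with $D$. It then remains to conclude that $M$ is a convolution operator, and $T=M-azI$ follows immediately from the definition of $M$.

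The one point that needs justification, and which I regard as the main (though mild) obstacle, is the passage from ``$M$ commutes with $D$'' to ``$M$ is a convolution operator.'' This is precisely the equivalence recalled in the Introduction between commuting with differentiation and commuting with all translations, so I would simply invoke it. If a self-contained argument is preferred, I would write the translation operator as the locally uniformly convergent series $S_\lambda f=\sum_{n=0}^\infty\frac{\lambda^n}{n!}f^{(n)}$; since $M$ is continuous and commutes with each power $D^n$, it may be passed through the series to give $MS_\lambda f=S_\lambda Mf$ for every $\lambda\in\mathbb{C}$, which is the defining property of a convolution operator. The continuity of $M$ is what legitimizes interchanging $M$ with the infinite sum, and this is the only step where care is required.
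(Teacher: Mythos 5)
Your proposal is correct and follows essentially the same route as the paper: both directions rest on the single commutator identity $[zI,\frac{\mathrm{d}}{\mathrm{d}z}]=-I$, and the ``only if'' direction proceeds by setting $M=T+azI$, checking $[M,\frac{\mathrm{d}}{\mathrm{d}z}]=0$, and invoking the equivalence (recalled in the Introduction) between commuting with differentiation and being a convolution operator. Your added remarks on the continuity of multiplication by $z$ and the series argument for $MS_\lambda=S_\lambda M$ are sound refinements of the same proof, not a different one.
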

\begin{proof}
1) Let $T=M-azI$, where $M$ is an arbitrary convolution operator (\ref{eq1}) on $H(\mathbb{C})$. Then we have $[T,\frac{\mathrm{d}}{\mathrm{d}z}]=[M,\frac{\mathrm{d}}{\mathrm{d}z}]-[azI,\frac{\mathrm{d}}{\mathrm{d}z}]=aI$.
\par
2) Let $T$ satisfies the commutation relation (\ref{f1}), i.e. $[T,\frac{\mathrm{d}}{\mathrm{d}z}]=aI$. Let us define the operator $T_0=-azI$. Then $[T_0,\frac{\mathrm{d}}{\mathrm{d}z}]=aI$. Denote $M=T-T_0$. Then $[M,\frac{\mathrm{d}}{\mathrm{d}z}]=[T-T_0,\frac{\mathrm{d}}{\mathrm{d}z}]=0$. Hence, $M$ is a convolution operator on $H(\mathbb{C})$, and $T=T_0+M=M-azI$.
\end{proof}
Let $T=M-azI$, where $M$ is a convolution operator. Then $T$ is hypercyclic by Theorems \ref{th1} and \ref{th5} if there exists an entire function $f$ such that $f\not\equiv0$ and $f\in\mathrm{ker}\,T$. Here we provide some examples of such functions and corresponding hypercyclic operators.
\par\noindent
{\bf Example}
a) Let $T=\frac{d}{dz}-zI$. If $f(z)=Ce^{z^2/2}$, where $C$ is some constant, then $f\in\mathrm{ker}\,T$. Hence, T is hypercyclic by Theorem \ref{th1}. Consider the operator
\begin{equation}\label{f5}
L(T)=\frac{d^2}{dz^2}+(2z-1)\frac{d}{dz}+(z^2-z-1)I,	
\end{equation}
where $L(\lambda)=\lambda^2+\lambda$. Then the operator (\ref{f5}) is hypercyclic by Theorem \ref{th2}.
\par
b) Let $T=\frac{d^2}{dz^2}-zI$. If $f(z)=C_1\mathrm{Ai}(z)+C_2\mathrm{Bi}(z)$, where $C_1$, $C_2$ are some constants, and $\mathrm{Ai}$, $\mathrm{Bi}$ are Airy functions (see e.g. \cite{11}), then $f\in\mathrm{ker}\,T$. Hence, T is hypercyclic by Theorem \ref{th1}.

Now we will describe the class of hypercyclic differential operators of infinite order with polynomial coefficients. This class contains as a partial case all convolution operators on $H(\mathbb{C})$ that are not scalar multiples of the identity.
\begin{thm}\label{th6}
Let $T=\frac{d}{dz}-azI$, where $a\in\mathbb{C}$ is some constant. Let $L(z)=\sum_{k=0}^\infty d_k\lambda^k$ be an arbitrary entire function of exponential type such that $L\not\equiv\mathrm{const}$. Then the operator
$$L(T)=\sum_{k=0}^\infty d_k T^k$$
is hypercyclic operator on $H(\mathbb{C})$.
\end{thm}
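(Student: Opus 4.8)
The plan is to show that the operator $T=\frac{d}{dz}-azI$ falls under the hypotheses of Theorem \ref{th2}, after which the result is immediate. Two things must be checked: that $T$ satisfies the commutation relation \eqref{f1}, and that $\mathrm{ker}\,T\neq\{0\}$. The first is essentially done already by Theorem \ref{th5}: taking $M=\frac{d}{dz}$ (a convolution operator) we have $T=M-azI$, so $[T,\frac{d}{dz}]=aI$. Strictly speaking, Theorem \ref{th5} is stated for $a\neq0$, whereas Theorem \ref{th6} allows arbitrary $a\in\mathbb{C}$; I expect this is just a matter of handling the case $a=0$ separately, where $T=\frac{d}{dz}$ and $L(T)$ is a convolution operator that is not a scalar multiple of the identity (since $L\not\equiv\mathrm{const}$), hence hypercyclic directly by the Godefroy--Shapiro theorem as noted in the Remark after Theorem \ref{th1}.

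First I would treat the main case $a\neq0$. The key remaining step is to exhibit a nonzero element of $\mathrm{ker}\,T$, i.e. to solve $f'(z)=az\,f(z)$. This first-order linear ODE has the explicit nonzero entire solution $f(z)=\exp(az^2/2)$, which settles condition (1) of Theorem \ref{th1}. With both conditions of Theorem \ref{th1} verified, the hypotheses of Theorem \ref{th2} are met for this $T$ and the given $L$ (which is entire of exponential type, so that $L(T)=\sum_k d_k T^k$ is a well-defined continuous operator on $H(\mathbb{C})$, and satisfies $L\not\equiv\mathrm{const}$). Theorem \ref{th2} then yields that $L(T)$ is hypercyclic.

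I do not anticipate a genuine obstacle here, since Theorem \ref{th6} is really a concrete specialization of Theorem \ref{th2}; the substantive work was already carried out in Sections 2 and 3. The only points requiring care are bookkeeping rather than analysis: confirming that $L$ being of exponential type guarantees $L(T)$ maps $H(\mathbb{C})$ continuously into itself (so that the phrase ``$L(T)$ is a continuous operator'' in Theorem \ref{th2} is satisfied), and disposing of the degenerate value $a=0$ via the convolution-operator observation above. Once these are in place, invoking Theorem \ref{th2} (equivalently Theorems \ref{th23} and \ref{th4}) completes the argument.
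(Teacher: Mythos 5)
Your overall route---verify the hypotheses of Theorem \ref{th2} for $T=\frac{d}{dz}-azI$ and conclude---is exactly the paper's, and your side observations are sound: $e^{az^2/2}$ is indeed a nonzero element of $\mathrm{ker}\,T$, and for $a=0$ the operator $L(T)=M_L$ is a convolution operator that is not a scalar multiple of the identity (as $L\not\equiv\mathrm{const}$), hence hypercyclic by Godefroy--Shapiro; the paper in fact glosses over the case $a=0$, so your case split is a genuine improvement in care.

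However, there is one genuine gap: you dismiss as ``bookkeeping'' precisely the point that constitutes the paper's entire proof, namely that $L(T)=\sum_{k} d_kT^k$ is a well-defined continuous operator on $H(\mathbb{C})$. This is not automatic from $L$ being of exponential type: the classical fact that exponential type guarantees convergence and continuity applies to the series $\sum_k d_kf^{(k)}$, i.e.\ to convolution operators (\ref{eq1}), whereas your $T$ contains the multiplication term $-azI$ and is not a convolution operator, so the local uniform convergence of $\sum_k d_kT^kf$ for every $f\in H(\mathbb{C})$, and the continuity of the resulting operator (needed verbatim as a hypothesis of Theorem \ref{th2}), require an argument. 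The paper supplies it by a conjugation trick you already had in hand when you solved $f'=azf$: writing an arbitrary $f\in H(\mathbb{C})$ as $f(z)=e^{az^2/2}g(z)$, one checks that $T\bigl(e^{az^2/2}g\bigr)=e^{az^2/2}g'$, hence $T^kf=e^{az^2/2}g^{(k)}$ and $L(T)f=e^{az^2/2}M_Lg$, where $M_L$ is the convolution operator with characteristic function $L$. Since $L$ has exponential type, $M_L$ is continuous on $H(\mathbb{C})$, and $L(T)$, being the composition of multiplication by $e^{-az^2/2}$, then $M_L$, then multiplication by $e^{az^2/2}$, is a well-defined continuous operator. With this paragraph inserted, your argument is complete and coincides with the paper's.
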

\begin{proof}
Let's show that $L(T)f\in H(\mathbb{C})$, $\forall f\in H(\mathbb{C})$. We can represent $f$ in the form $f(z)=e^{az^2/2}g(z)$. Then $T^kf(z)=e^{az^2/2}g^{(k)}(z)$, $k=0,1,\cdots$. Hence, $L(T)f(z)=e^{az^2/2}M_Lg(z)\in H(\mathbb{C})$, where $M_L$ is a convolution operator on $H(\mathbb{C})$ generated by $L$. Since $T$ satisfies the conditions of Theorem \ref{th1}, then $L(T)$ is hypercyclic by Theorem \ref{th2}.
\end{proof}

\par\noindent
Vitaly E. Kim
\par\noindent
Institute of Mathematics with Computing Centre,
\par\noindent
Russian Academy of Sciences,
\par\noindent
112 Chernyshevsky str.,
\par\noindent
450008 Ufa, Russia
\par\noindent
e-mail: kim@matem.anrb.ru
\end{document}